\documentclass{amsart}
\usepackage{color}
\usepackage{latexsym,amsfonts,amsthm,amsmath,mathrsfs, amssymb}%, showlabels
\newtheorem{theo}{Theorem}
\newtheorem{lem}{Lemma} % \newtheorem{lem}[theo]{Lemma} per avere stesso contatore che per theo 
\newtheorem{prop}{Proposition} %\newtheorem{prop}[theo]{Proposition} per avere stesso contatore che per theo 
\newtheorem{defi}{Definition} %\newtheorem{defi}[theo]{Definition}  per avere stesso contatore che per theo 
\theoremstyle{definition}

\newcommand{\si}{\sigma}
\newcommand{\Om}{\Omega}
\newcommand{\e}{\epsilon}
\newcommand{\de}{\partial}
\newcommand{\oo}{\omega}
\newcommand{\la}{\lambda}
\newcommand{\R}{\mathbb{R}}

\newcommand{\N}{\mathbb{N}}

\newcommand{\ve}{\varepsilon}
\newcommand{\ds}{\displaystyle}

\numberwithin{equation}{section}

\newcommand{\Rnp}{\ensuremath{\R^{N+1}_{+}}}

\newcommand{\xnp}{\ensuremath{x_{N+1}}}  % Variable x_N+1
\newcommand{\np}[3]{\ensuremath{\left\| #3 \right\|_{#1}^{#2}}} % Norma in dimensione pi� uno
\newcommand{\nn}[3]{\ensuremath{\left\| #3 \right\|_{#1}^{#2}}} % Norma generica
\newcommand{\nt}[3]{\ensuremath{\left| #3 \right|_{#1}^{#2}}}  % Norma della traccia
\newcommand{\trin}[3]{\ensuremath{\left|\!\left|\!\left| #3 \right|\!\right|\!\right| _{#1}^{#2}}}  % Norma equivalente (trinorma)
\newcommand{\dual}[3]{\ensuremath{ \left< #2,#3 \right>_{#1} }}

\newcommand{\Cil}{\ensuremath{\mathcal{C}}}
\newcommand{\HuzL}{\ensuremath{H^1_{0,L}(\Cil)}}
\newcommand{\conv}[2]{\ensuremath{ \left< #1,#2 \right> }}

%%%%%%%%%%%%%%%%%%%%%%%%%%%%%%%%%%%%%%%%%%%  DUBBI %%%%%%%%%%%%%%%%%%%%%%%%%%%%%%%%%%%%%%%%%%%%%%%%%%%%%%%%%%%%%%%%%%%%%%%%%%%
% MAURO, CERCA: $\int_{\Omega} \conv{G}{{v^+}^2}} {v^+}^2 > 0$ 
%Quale ipotesi minima per garantire che esista una funzione v tale che $\int_{\Omega} \conv{G}{{v^+}^2}} {v^+}^2 > 0$
%%%%%%%%%%%%%%%%%%%%%%%%%%%%%%%%%%%%%%%%%%%%%%%%%%%%%%%%%%%%%%%%%%%%%%%%%%%%%%%%%%%%%%%%%%%%%%%%%%%%%%%%%%%%%%%%%%%%%%%%%%%%%%
% MAURO, CERCA: $\dot{J}_a=\{x\in \Omega : J(x)< a\}$   
%dobbiamo metterlo questo fantomatico sottolivello aperto? Nel nostro case la (C) Cerami vale, perci� � solo un richiamo teorico, decidi tu se metterlo o no.
%%%%%%%%%%%%%%%%%%%%%%%%%%%%%%%%%%%%%%%%%%%%%%%%%%%%%%%%%%%%%%%%%%%%%%%%%%%%%%%%%%%%%%%%%%%%%%%%%%%%%%%%%%%%%%%%%%%%%%%%%%%%%%

\begin{document}

\title[Fractional Hartree equation without A-R condition]{
The fractional Hartree equation without the Ambrosetti-Rabinowitz condition
%Existence results for a pseudorelativistic Hartree equation without the Ambrosetti-Rabinowitz condition}
}

\author{Mauro Francesconi}
\address{Dipartimento di Matematica e
Informatica\\Universit\`a di Perugia\\Via Vanvitelli 1, 06123
Perugia - Italy}
\email{mauro.francesconi@dmi.unipg.it}

\author{Dimitri Mugnai}
\address{Dipartimento di Matematica e
Informatica\\Universit\`a di Perugia\\Via Vanvitelli 1, 06123
Perugia - Italy}
\email{dimitri.mugnai@unipg.it }

\keywords{pseudo-relativistic Hartree equation, Superlinear reaction, Ambrosetti-Rabinowitz
condition%strong deformation retract, nonlinear regularity, 
}

%\subjclass[2010]{Primary 35J20, Secondary 35J60, 35J92, 58E05}
\subjclass[2010]{Primary 35J20, Secondary 35Q55, 35A15, 35Q40, 35Q85, 58E05}

%2010AMS Subject Classification:
%Primary 35J20; Variational methods for second-order elliptic equations
%35-XX PARTIAL DIFFERENTIAL EQUATIONS
%Secondary 
%35Q55, NLS-like equations (nonlinear Schrodinger)
%35A15, Variational methods
%35Q40, 2010: PDEs in connection with quantum mechanics 2000: Equations from quantum mechanics
%35Q85, 2010: PDEs in connection with astronomy and astrophysics 2000: 
% 58-XX GLOBAL ANALYSIS, ANALYSIS ON MANIFOLDS
%58E05 Abstract critical point theory (Morse theory, Ljusternik-Schnirelman (Lyusternik-Shnirel0man) theory, etc.)

%  \date{}
\begin{abstract}
We consider a class of pseudo-relativistic Hartree equations in presence of general nonlinearities not satisfying the Ambrosetti-Rabinowitz condition. Using variational methods based on critical point theory, we show the existence of two non trivial signed solutions, one positive and one negative.
\end{abstract}

\maketitle

\section{Introduction}
In this paper we deal with a general class of pseudo--relativistic Schr{\"o}dinger equations with a Hartree non linearity.
Such equations emerge from the description of pseudorelativistic boson stars (see \cite{Lieb_Thirring} for a physical derivation of the problem), but also as the mean field limit description of a quantum relativistic Bose gas (see \cite{ES} and \cite{LY}).
Fr{\"o}hlich and Lenzmann in \cite{Frohlich_Lenzmann} and \cite{Frohlich_Jonsson_Lenzmann} approached the problems of existence, blowing up and stability of solutions. The problem they studied in \cite{Frohlich_Lenzmann} took the following form:
\begin{equation}\label{1}
i \psi_t = \sqrt{- \Delta + m^2} \psi   -\left( \frac{1}{|x|}* |\psi|^2 \right)\psi  \mbox{ in }\R^3,
\end{equation}

Here $\psi$ is a complex valued wave function which describes the quantum status of a particle, while the operator involving the square root represents its relativistic kinetic and rest energies, and reduces to the usual half Laplacian $(-\Delta) ^{1/2}$ when $m=0$. Besides, the term $1/|x|$ inside the convolution product stands for the Newtonian gravitational potential in $\R^3$ and represents repulsive forces among the particles.

% In \cite{Mugnai_Pseudo}, Mugnai studied a generalized version of (\ref{1}), allowing for an additional $C^1(\R)$ potential term $F:\R \rightarrow \R$ that can take into account for interactions among particles. In addition, he considered a general field potential $W$ instead of the particular Newtonian one.
% Under this hypothesis, equation (\ref{1}) takes the following form:

% \begin{equation}\label{2}
% i \psi_t = \sqrt{- \Delta + m^2} \psi   -\la \left( W* |\psi|^2 \right)\psi - f(\psi) \mbox{ in }\R^N,
% \end{equation}
% with $\la \in \R$.
% Some natural properties are required on $W$ and the external potential $F(s) =\int^s f(t)$. 
% Usually $W$ is required to be radially symmetric (as a generalization of the Newtonian potential), 
% while $F$ is required to be phase invariant (i.e. $F(e^{i\theta u})=F(u)$) and with its derivative phase-linear ($\ds F'( u)= e^{i\theta} F'(u)$).
% Here we will consider a potential term that is allowed to depend on the space variable too, although in a symmetric way, as for the $W$ potential.
%Here we will consider a potential term that is allowed to depend on the space variable too, although in a symmetric way, as for the $W$ potential.

In \cite{Mugnai_Pseudo}, a generalized version of \eqref{1} is studied, allowing for an additional potential term $f:\R^N \times \R \rightarrow \R$, which takes into account other external forces, and, in addition, a general field potential $W$ replaces the Newtonian one. In this setting, equation \eqref{1} takes the following form:
\begin{equation}\label{22}
i \psi_t = \sqrt{- \Delta + m^2} \psi   -\la \left( W* |\psi|^2 \right)\psi - f(x,\psi) \mbox{ in }\R^N,
\end{equation}
with $\la \in \R$.

In this paper we search solutions of a problem corresponding to \eqref{22} but settled in a bounded domain $\Omega$ of $\R^N$.
This allows us to remove the hypothesis of radial symmetry of the solutions and of the potential $f$ assumed in \cite{Mugnai_Pseudo}.
The problem we study is the following one:
\begin{align}\label{2}
\left\{
\begin{array}{llr}
i \psi_t(x,t) = & \sqrt{- \Delta + m^2} \psi(x,t)  - &\\
  & \ds-\la \left( \int_{\Omega} G(x,y) |\psi(y,t)|^2 dy \right)\psi(x,t) - f(x,\psi(x,t)) & \mbox{ in }\Omega, \\
 \psi (x,t) = & 0  &\mbox{ on }\partial \Omega, \;\; \forall t % \nonumber 
\end{array}
\right.
\end{align}
with $\la \in \R$ and $\Omega \subset \R^N$ bounded.

In \eqref{2}, passing from $\R^N$ to $\Omega$, we have replaced the Newtonian--like kernel $W(x-y)$ with the Green function $G(x,y)$ associated to the Laplace operator in $\Omega$ (indeed this is the Coulomb--type interaction between particles in boson stars), and we consider homogeneous boundary conditions on $\partial \Omega$. In this way, the corresponding potential $\phi$ at time $t$ takes the form $\ds \phi(x) = \int_{\Omega} G(x,y) |\psi(y,t)|^2 dy $. From now on we will adopt the symbol $\ds\conv{G}{\psi} = \int_{\Omega} G(x,y) |\psi(y,t)|^2 dy$ for the previous potential term. We also note that
potential $\ds \phi(x) = \conv{G}{\psi} $ is the solution of the linear problem
% MAURO: SAR{\'a} VERO CON IL TEMPO  DI MEZZO? MI SEMBRA DI SI, ALTRIMENTI IL PROBLEMA ASSOCIATO LO SCRIVO SOLO PER L'EQUAZIONE STAZIONARIA PER LA u(x) ... io credo che vada bene anche cos�, ma tu che ne pensi?
\begin{align}\label{Green}
\left\{
\begin{array}{l}
- \Delta \phi(x,t) = 4 \pi |\psi(x,t)|^2 \mbox{ in } \Omega\\
 \phi = 0 \mbox{ on }\partial \Omega
 \end{array}
 \right.
\end{align}
for every $t$, so that problem \eqref{2} can be written as a system with an additional equation for $\phi$, as similarly done in \cite{bf}, \cite{teadim}, \cite{tdnonex}, \cite{KGBI}.

It is worth reminding some general properties of Green functions for $C^1$ bounded domains $\Omega$, which we shall use later (for instance, see \cite{Gunther_Widman}).
Green functions $G:\Omega \times \Omega \rightarrow \R \cup \{ \infty\}$ are non negative, symmetric with respect to their variables, and when $N\geq 3$ they verify the inequality $G(x,y) \leq C |x-y|^{2-N}$, where the right hand side is the kernel of the Newtonian like potential.
More generally,  inspired by the above inequality, in our setting we consider a function $G$ which is symmetric, non negative and satisfies certain integrability conditions.
To be precise, we will require $G(x,y) \leq W(x-y)$, with $W$ satisfying some integrability conditions which cover the case of the Newtonian kernel for $N \geq 3$.

In order to obtain existence of solutions for problem \eqref{2}, it is crucial to specify some hypothesis on the external potential $\ds F(x,\psi) =\int_0^{\psi} f(x,s) ds$. 
The prototype for $F$ is a power--like potential, so it is natural to require $F(x,s)=F(x,|s|)$ and $f(x,e^{i\theta}|s|) = 
e^{i\theta}f(x,|s|)$. This is not restrictive in the setting of Abelian Gauge Theories (see \cite{Benci_Fortunato}, \cite{Mugnai_Solitary}), and it allows us to search for real solutions of the stationary equation associated to \eqref{2}.
Indeed, we focus on solutions in the form of solitary waves, i.e. on functions of the form
\begin{equation}\label{solitary}
\psi(x,t) = e^{-i \oo t} u(x),
\end{equation}
where $\oo \in \R$ and $u:\Om \rightarrow \R$.

After substitution of \eqref{solitary} into \eqref{2}, and considering that the operator $\sqrt{- \Delta + m^2}$ acts only on the spatial coordinates, we see that function $u$ satisfies the following stationary equation:
\begin{align}\label{stationary}
\left\{
\begin{array}{lr}
\sqrt{- \Delta + m^2} u - \oo u -\la \conv{G}{u^2} u - f(x,u) = 0 &\mbox{ in }\Om. \\
u=0 &\mbox{ on } \partial \Omega
\end{array}
\right.
\end{align}

We remark that a standard assumption on $F$ for solving stationary equations like \eqref{stationary} is the fulfillment of the usual Ambrosetti-Rabinowitz condition, see \cite{Benci_Fortunato}, \cite{Mugnai_Solitary}, \cite{Mugnai_Poisson} (or a reversed one (see \cite{Mugnai_Pseudo}). We recall that this condition reads as follows: there exists $\mu>2$ such that
\begin{equation}\label{A-R}
0 < \mu F(x,s) \leq sf(x,s) \ \mbox{ for a.e. }x \in \Omega \mbox{ and for all } s \in \R.
\end{equation}
Condition \eqref{A-R} is very useful to prove that Palais--Smale sequences are bounded, and in turn that Palais Smale condition (PS-condition in short) holds, so that an essential ingredient in variational methods is guaranteed.

Although very convenient, condition \eqref{A-R} rules out many interesting non linearities.
For this reason, many efforts have been done recently to remove or relax it (see Li, Wang, Zeng \cite{Li_Wang_Zeng}, \cite{Li_Zeng}, \cite{Liu}, \cite{Li_Wang}, Myagaki and Souto \cite{Miyagaki_Souto}). In this paper, we adopt the strategy of Mugnai and Papageorgiou (\cite{Mugnai_Papageorgiou}),
which consists in requiring a quasi-monotone property for the function
\begin{equation}\label{sigma_fun}
\sigma(x,s) = f(x,s)s - 2F(x,s) \mbox{ for a.e. }x \in \R^N \mbox{ and for all } s \in \R,
\end{equation}
see Section \ref{sec2} for the precise assumptions. The purpose of this paper is to show that, under our weak assumptions, equation \eqref{stationary} has two non trivial bounded solutions. In order to prove this result, we will employ an {\em a priori} estimate for solutions of \eqref{stationary} of independent interest, see Proposition \ref{Regularity_of_weak_solutions} below.

\section{Extended problem and assumptions}\label{sec2}

We start this section by reviewing the essential tools to face problem \eqref{stationary}. We follow the idea of extending equation \eqref{stationary} to an equivalent one in higher dimension, by means of the Dirichlet-to-Neumann operator $\ds \left.-\frac{\partial }{\partial x_{N+1}}\right|_{x_{N+1}=0}$ (see  \cite{Cabre_Tan} for this procedure in bounded domains and \cite{Caffarelli_Silvestre} for the whole spatial domain). This method leads us to consider the following problem:
\begin{equation}\label{equiv}
\begin{cases}
- \Delta v+ m^2 v =0  & \mbox{ in }\Cil,\\
\ds-\frac{\partial v}{\partial x_{N+1}} =  \oo v +\la  \conv{G}{v^2} v + f(x,v) &\mbox{ on } \Om \times \{0\},\\
v = 0 &\mbox{ on } \partial_L \Cil := \partial \Om \times [0,\infty). 
\end{cases}
\end{equation}
where $\Cil = \Om \times (0,\infty)$ is the positive half cylinder with base $\Om$ and $\partial_L \Cil$ is its lateral boundary. As in \cite{Cabre_Tan}, we have that if $v$ satisfies \eqref{equiv}, then its trace $u(\cdot):=v(\cdot,0)$ on $\Omega \times \{0\}$ satisfies problem \eqref{stationary}.

In order to define a weak solution of \eqref{equiv} (see Cabr{\'e} and Tan \cite{Cabre_Tan}), we introduce the Sobolev space  
\begin{equation}\label{HuzL}
\HuzL = \Big\{v \in H^1(\Cil) \,: \, v=0 \mbox{ a.e. on } \partial_L \Cil  \Big\},
\end{equation}
equipped with the inner product
\[
\langle u,v\rangle = \ds \int_{\Cil} (Du\cdot Dv + m^2 uv)dxdx_{N+1},
\]
which makes $\HuzL$ a Hilbert space with respect to the induced norm (here and henceforth we shall denoted by $x$ a general point of $\Omega$ and by $x_{N+1}$ an element of $[0,\infty)$).

Hence, we can give the following
\begin{defi}[Weak Solution]
A function $v \in \HuzL$ is a weak solution of problem \eqref{equiv} if
\begin{equation}\label{weak_sol}
\int_{\Cil} \left[ Dv\cdot Dw + m^2  v w  \right]dxdx_{N+1}= \int_{\Omega} \left[ \oo v + \la \conv{G}{v^2}v  + f(x,v) \right] w\,dx
\end{equation}
for every $w \in \HuzL$.
\end{defi}

Recalling that $G$ is symmetric, it is easy to see that problem \eqref{weak_sol} is of variational nature, so that a function $v$ in $ \HuzL$ satisfies \eqref{weak_sol} if and only if it is a critical point of the following energy functional $J: \HuzL\to \R$ defined as
\begin{equation}\label{funct}
J(v) = \frac{1}{2} \int_{\Cil} \left[ |Dv|^2 + m^2  v^2  \right] dxdx_{N+1}- \int_{\Omega} \left[ \frac{\oo}{2} v^2 + \frac{\la}{4} \conv{G}{v^2}v^2  + F(x,v) \right]dx,
\end{equation}
or, in compact form,
\begin{equation}\label{funct_norm}
J(v) = \frac{1}{2}  \np{2}{2}{Dv} + \frac{m^2}{2}  \np{2}{2}{v}  - \frac{\oo}{2}  |v|_2^2 - \frac{\la}{4} \int_{\Omega} \conv{G}{v^2}v^2 dx - \int_{\Omega}  F(x,v)dx,
\end{equation}
where $\nt{p}{}{\cdot}$ and $\np{p}{}{\cdot}$ denote the $L^p$ norm in $\Omega$ and in $\Cil$, respectively. The derivative of functional J acts on any function $w \in \HuzL$ in the following way:
\begin{equation}\label{funct_derivative}
\begin{aligned}
J'(v)w &= \dual{2,N+1}{Dv}{Dw} + m^2 \dual{2,N+1}{v}{w}   -\oo  \dual{2,N}{v}{w} \\
&- \lambda\int_{\Omega} \conv{G}{v^2} v w\,dx  - \int_{\Omega}  f(x,v)w\,dx,
\end{aligned}
\end{equation}
where $\dual{p,n}{\cdot}{\cdot}$ denotes the duality product in $[L^p(\mathcal{C})]'\times L^p(\mathcal{C})$ when $n=N+1$ or in $[L^p(\Omega)]'\times L^p(\Omega)$ when $n=N$.

Of course, the previous considerations are just formal ones, if we don't assume appropriate conditions on $f$ and $G$. For this, throughout this paper we make the following assumptions, which guarantee that the formal considerations above are indeed true:
\medskip

\noindent $\Omega\subset \R^N$, $N\geq2$, is a bounded domain with boundary of class $C^{2,\alpha}$;\\ 
{\bf (H)}: $f:\Omega \times \R \rightarrow \R$ is a Carath\'eodory function, with $f(x,0)=0$ for a.e. $x \in \Omega$. Moreover, if $\ds F(x,s): = \int_0^s f(x,\tau) d\tau$, we suppose that
\begin{description}
\item[Hi)] there exist $\ds c>0$, $\ds a \in L^{\infty}(\Omega)$, $a\geq 0$ a.e. in $\Omega$, and 
$\ds r \in \left(2, \frac{2N}{N-1}\right)$ such that $$|f(x,s)|\leq a(x) + c|s|^{r-1}$$ for a.e. $x \in \Omega$ and all $s\in \R$; \\
%\textit{ MAURO: non credo mi basti $L^{r'}_+(\Omega)$, in pi{\'u} a questo punto credo che si possa mettere direttamente una costante a, avere una funzione in $L^{\infty}$ non {\'e} pi{\'u} generale. L'ipotesi a(x) limitata la uso in due punti, per dedurre la (\ref{Hi_&_Hiv_joined}) e nel dimostrare che le soluzioni deboli sono limitate, che a sua volta uso per la regolarit{\'a}. P.s. Scusa gli accenti, ma non riesco a cambiarli con la mia tastiera!}

\item[Hii)] $\ds \lim_{|s|\rightarrow  \infty} \frac{F(x,s)}{s^2} = +\infty$ uniformly for a.e. $x \in \Omega$;

\item[Hiii)] if $\ds \sigma(x,s) := f(x,s)s - 2F(x,s)$, there exists $\beta^* \in L^1_+(\Omega)$ s.t.
\[
\sigma(x,s) \leq \sigma(x,t) + \beta^*(x) 
\]
for a.e. $\ds x \in \Omega$ and all $0\leq s\leq t$ or $t\leq s\leq 0$;			
\item[Hiv)] there exists $\theta \in L^{\infty}_+ (\Omega)$ with $\theta_{\infty} = |\theta|_{\infty} < m - \oo$ such that $$\ds \limsup_{s\rightarrow  0} \frac{F(x,s)}{s^2} \leq \frac{\theta(x)}{2}.$$
%\item [Hv)] $F(x,s)\geq g(x)$, where $g(x)\in L^1(\Omega)$ %This hypothesis allows using Fatou's lemma in unbounded domains
\end{description}
%MAURO: 

Concerning $G$, as a generalization of the Green function of the domain $\Omega$, which belongs to $L^r(\Omega)$ for $r<N$, we assume the natural hypothesis\\
 (\textbf{G}): $G\geq 0$, $G(x,y)=G(y,x)$ and $G(x,y) \leq W(x-y)$ for every $(x,y)\in \Omega \times \Omega$, where $W\geq 0$ in $\R^N$, $W\in L^r(\R^N)$ for some $r\in (\frac{N}{2},\infty)$ and $W=0$ in $\R^N\setminus \Omega$.

\section{Technical inequalities}

We now establish some useful inequalities which will be used extensively throughout the paper.

We start with the continuous inclusions (see \cite[Lemma 2.4]{Cabre_Tan})
\begin{equation}\label{inclusion}
% \HuzL \hookrightarrow L^r(\Omega), \;\; \forall r \in (\frac{2N}{N+1},\frac{2N}{N-1}] 
\HuzL \hookrightarrow L^r(\Omega) \quad \mbox{for all $r \in \left[1,\frac{2N}{N-1}\right]$},
\end{equation}
%and, restricting ourselves to the spatially symmetric functions, the inclusions become compact for a smaller interval of values of $r$ (see \cite{Strauss_Compact}): 
and the compact ones (see \cite[Lemma 2.5]{Cabre_Tan})
\begin{equation}\label{compact_inclusion}
% \HuzL \hookrightarrow L^r(\Omega), \;\; \forall r \in (2,\frac{2N}{N-1})
\HuzL \hookrightarrow L^r(\Omega) \quad \mbox{for all $r \in \left[1,\frac{2N}{N-1}\right)$}.
\end{equation}

Now, take $v\in \mathcal{C}^{\infty}(\Rnp)\cap \HuzL$; then, proceeding as in \cite{Mugnai_Pseudo},
\begin{multline}\label{trace_in_1}
\int_{\Omega} |v(x,0)|^q dx = \int_{\Omega} \left( - \int_0^{\infty} \frac{\de}{\de x_{N+1}} |v(x,x_{N+1})|^q d \xnp \right) d x \\
= -q\int_{\Cil} |v(x,x_{N+1})|^{q-2} v(x,x_{N+1}) \frac{\de v}{\de x_{N+1}} (x,x_{N+1})  d x \; d\xnp 
\end{multline}
Applying the H\"{o}lder inequality with exponent 2, we get
\begin{equation}\label{trace_in_2}
\int_{\Omega} |v(x,0)|^q dx  \leq q  \np{2}{}{v^{q-1}}  \np{2}{}{\frac{\de v}{\de \xnp}}  \leq q  \np{2(q-1)}{q-1}{v}  \np{2}{}{Dv}.
\end{equation}
By interpolation for $2(q-1)$ between 2 and $\ds 2^\sharp=\frac{2N}{N-1}$, followed by the Sobolev embedding inequality, we find the trace inequality
\[
|v|_q\leq S_q\|v\|
\]
for every $v\in  \mathcal{C}^{\infty}(\Rnp)\cap \HuzL$, where $S_q$ is an absolute positive constant.

Moreover, if we use the Cauchy inequality in \eqref{trace_in_1}, when $q=2$ we obtain
\begin{equation}\label{trace_in_3}
|v|_2^2 = \int_{\Omega} |v(x,0)|^2 dx  \leq   \e \np{2}{2}{v} + \frac{1}{\e} \np{2}{2}{\frac{\de v}{\de \xnp}}.
\end{equation}

In particular, choosing $\e=m$, (\ref{trace_in_3}) gives the following estimate for the trace norm:
\begin{equation}\label{tracem}
|v|_2^2\leq m\np{2}{2}{v}+ \frac{1}{m} \np{2}{2}{\frac{\de v}{\de \xnp}}\leq m\np{2}{2}{v}+ \frac{1}{m} \np{2}{2}{Dv} \quad \forall \; v \in  \mathcal{C}^{\infty}(\Rnp)\cap \HuzL.
\end{equation}
Finally,  by density, we have that all the inequalities above hold for every $v \in \HuzL$.
\smallskip

Now, we proceed by inferring some inequalities on $F$ which come directly from hypothesis {\bf (H)}. First, a direct integration of {\bf Hi)} gives
\begin{equation}\label{Hi_consequence}
|F(x,s)| \leq a(x)|s| + \frac{c}{r}|s|^r \quad \mbox{ for a.e. $x\in \Omega$ and for all }s\in\R.
\end{equation}

Furthermore, from {\bf Hiv)} we can say that for any $\e>0$ there exists $\delta=\delta(\e)>0$ such that
\begin{equation}\label{Hiv_consequence}
F(x,s) \leq \frac{\theta(x) + \e}{2}s^2\quad \mbox{ for a.e. $x\in \Omega$ and for all } |s| < \delta.
\end{equation}

From \eqref{Hi_consequence} and \eqref{Hiv_consequence}, we deduce  that
\begin{equation}\label{Hi_&_Hiv_joined}
F(x,s) \leq \frac{\theta(x) + \e}{2}s^2 + C_\ve |s|^r \quad \mbox{for a.e $x \in \Omega$ and for all }s\in \R,
\end{equation}
where $C_\ve=C_{\delta(\ve)}=\frac{\|a\|_\infty}{\delta^{r-1}}+\frac{c}{r}$.
\smallskip

We end this section by showing an estimate involving field potential $G$. First, extend $G$ outside $\Omega \times \Omega$ and any function $u,\, v,\, w\in \HuzL$ outside $\Omega\times \{0\}$ in a trivial way. By H\"older's inequality, if $2q\in [2,2^\sharp]$, we get
\[
\left| \int_\Omega \langle G,v^2\rangle uw\,dx \right|\leq |\langle W, v^2\rangle|_{q'}
 |uw|_{q}=|W\ast v^2|_{q'}|uw|_q,
\]

where $\ast$ denotes the usual convolution product in $\R^N$ and where we have denoted traces on $\Omega\times \{0\}$ simply by functions themselves. Now, apply Young's inequality for convolutions, choosing $q$ so that
$\frac{1}{q'}=\frac{1}{r}+\frac{1}{q}-1$, that is $q=\frac{2r}{2r-1}$, so that from the previous inequality we get
\begin{equation}\label{W_inequality}
\left| \int_\Omega \langle G,v^2\rangle uw\,dx \right|\leq|W|_r|v|_{2q}^2|uw|_{q}\leq |W|_r|v|_{2q}^2|u|_{2q}|w|_{2q}.
\end{equation}
We remark that, since $r\in\left(\frac{N}{2},\infty\right)$, we have $1< q<N/(N-1)$. Finally, by the
interpolation and the Sobolev inequalities, we get that there exists
$C_G>0$ such that
\begin{equation}\label{maggconv}
\left|\int_{\Omega}\langle G, v^2\rangle uw\,dx\right| \leq C_G\|v\|^2\|u\|\|w\| \quad \mbox{
for any }u,\,v,\,w\in \HuzL.
\end{equation}

\section{Regularity of weak solutions}%\label{sec3}

In this section we briefly complement Cabr\'e--Tan's results on regularity of weak solutions: such results seem to be very natural, and are related to the regularity properties established in \cite[Proposition 3.1]{Cabre_Tan} for $m=0$ and in \cite[Theorem 3.2 and Proposition 3.9]{Zelati_Nolasco}.

\begin{prop}\label{Regularity_of_weak_solutions}
Suppose $\Omega \subset \R^N$ is a Lipschitz bounded domain with $\alpha \in (0,1)$. Then, under hypotheses {\bf(H)} and {\bf(G)}, all weak solutions $v$ of problem \eqref{equiv} are of class $L^{\infty}(\Cil)\cap C^{\alpha}(\overline{\Omega})$ and $u \in L^\infty(\Omega)$, being $u(\cdot)=v(\cdot,0)$ a solution of \eqref{stationary}. Moreover, for every $p\in[1,\infty]$ there exists $M_p>0$ such that
\[
\|v\|_{L^p(\mathcal{C})}\leq M_p
\]
and also
\[
\|u\|_{L^p(\Omega)}\leq M_p.
\]
\end{prop}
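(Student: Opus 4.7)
My plan is to first establish the $L^\infty$ bound on the trace $u := v(\cdot,0)$ by Moser iteration on the half-cylinder, and then upgrade it into global estimates on $v$ in $\Cil$ by exploiting the linearity of the extended equation $-\Delta v + m^2 v = 0$ together with the exponential decay in $x_{N+1}$ forced by the mass term. The subcriticality built into \textbf{(Hi)}, namely $r < 2^\sharp := 2N/(N-1)$, is the engine that makes the iteration close.

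As a preliminary reduction I would dispose of the nonlocal term once and for all. From \eqref{inclusion} we have $u \in L^{2^\sharp}(\Omega)$, hence $u^2 \in L^{N/(N-1)}(\Omega)$. Since by \textbf{(G)} we have $G(x,y) \leq W(x-y)$ with $W \in L^r(\R^N)$, $r > N/2$ (so $r' < N/(N-1)$), Young's inequality for convolutions---already invoked to derive \eqref{W_inequality}---yields $\conv{G}{u^2} \in L^\infty(\Omega)$, with norm controlled by $\|v\|$ and $|W|_r$. Consequently, $\lambda\,\conv{G}{u^2}\,u$ may from now on be treated as a bounded linear potential times $u$.

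For the iteration itself, fix $L > 0$ and $\beta \geq 0$, set $v_L := \min\{|v|,L\}$, and test \eqref{weak_sol} with $w := v\,v_L^{2\beta}$, which belongs to $\HuzL$ thanks to the truncation. A standard chain-rule manipulation yields
\[
\frac{1}{\beta+1}\,\|v\,v_L^{\beta}\|^{2} \leq C \int_\Omega \left[(|\omega| + K)u^2 + |a|_\infty\,|u| + c|u|^r\right] u_L^{2\beta}\,dx,
\]
where $K := |\conv{G}{u^2}|_\infty$. Combining this with the trace-Sobolev inequality $|v\,v_L^\beta|_{2^\sharp} \leq S_{2^\sharp}\|v\,v_L^\beta\|$ and splitting the leading term $|u|^r u_L^{2\beta}$ via Hölder's inequality with an exponent made admissible by $r < 2^\sharp$, one arrives at a recurrence of the shape
\[
|u\,u_L^\beta|_{2^\sharp}^{2} \leq C(\beta+1)\,|u\,u_L^\beta|_{q}^{2}
\]
for some fixed $q \in [2, 2^\sharp)$. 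Iterating $\beta$ along a geometric sequence, passing $L \to \infty$ by monotone convergence, and running the standard Moser bookkeeping produces $|u|_\infty \leq M_\infty$, with $M_\infty$ depending only on $\|v\|$ and the data.

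Once $u \in L^\infty(\Omega)$, the Neumann datum $g := \omega u + \lambda\,\conv{G}{u^2}\,u + f(\cdot,u)$ lies in $L^\infty(\Omega)$, so $v$ solves a purely linear problem with bounded boundary data. The Cabré--Tan regularity theory (see \cite[Proposition 3.1]{Cabre_Tan}, adapted to $m > 0$ as in \cite[Theorem 3.2, Proposition 3.9]{Zelati_Nolasco}) then yields $v \in L^\infty(\Cil) \cap C^\alpha(\overline{\Omega})$. Expanding $v$ in Dirichlet eigenfunctions of $-\Delta$ on $\Omega$ shows that the Fourier coefficient corresponding to eigenvalue $\mu_k$ decays as $e^{-\sqrt{\mu_k + m^2}\,x_{N+1}}$, so $v$ decays at least like $e^{-m x_{N+1}}$ as $x_{N+1}\to\infty$; combined with the $L^\infty$ bound on the trace, this gives uniform $L^p(\Cil)$ bounds for every $p \in [1,\infty]$, while on $\Omega$ the corresponding bound follows trivially from $|u|_p \leq |\Omega|^{1/p}|u|_\infty$. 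I expect the main technical obstacle to lie in the Moser step itself, where the polynomial growth of $f$, the nonlocal convolution potential, and the boundary-trace nature of the iteration must be harmonized so that the recurrence actually closes and the constants remain summable along the iterative sequence.
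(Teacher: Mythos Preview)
Your overall strategy---Moser iteration on the half-cylinder followed by linear regularity via \cite{Cabre_Tan} and \cite{Zelati_Nolasco}---coincides with the paper's. However, your preliminary reduction contains a genuine error: you assert that $r>N/2$ implies $r'<N/(N-1)$, and hence that $\langle G,u^2\rangle\in L^\infty(\Omega)$ from the outset. This implication is false; in fact $r'<N/(N-1)$ is equivalent to $r>N$, whereas hypothesis \textbf{(G)} only requires $r\in(N/2,\infty)$. For $r\in(N/2,N)$, Young's inequality yields merely $\langle G,u^2\rangle\in L^s(\Omega)$ for some finite $s>N$, not $L^\infty$, so you cannot treat the convolution as a bounded potential and write the right-hand side as $(|\omega|+K)u^2$ with $K=|\langle G,u^2\rangle|_\infty$.

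The paper, following \cite[Theorem~3.2]{Zelati_Nolasco}, sidesteps this by absorbing both the convolution term and the factor $|u|^{r-2}$ coming from \textbf{(Hi)} into a single weight $g\in L^N(\Omega)$, arriving at
\[
\int_{\Cil}\big(|D(v_+^{\beta+1})|^2+(v_+^{\beta+1})^2\big)\,dx\,dx_{N+1}\leq c_\beta\int_\Omega(c+g)\,v_+^{2\beta+2}\,dx,
\]
after which the bootstrap closes exactly as in \cite{Zelati_Nolasco}. Your argument is easily repaired in the same spirit: drop the $L^\infty$ claim, keep $\langle G,u^2\rangle\in L^s$ with $s>N$, and estimate $\int_\Omega\langle G,u^2\rangle(u\,u_L^\beta)^2\,dx\leq|\langle G,u^2\rangle|_s\,|u\,u_L^\beta|_{2s'}^2$ with $2s'<2^\sharp$---precisely the Hölder manoeuvre you already perform for the $|u|^r$ term. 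The remaining steps (invoking linear regularity once $u\in L^\infty(\Omega)$, and the exponential decay in $x_{N+1}$) are fine.
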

\begin{proof}
In order to prove the first statement, we need only minor changes in the proof of \cite[Theorem 3.2]{Zelati_Nolasco}, and for this here we will be sketchy. As usual, set $v_T = \max{\left\{v_+,T\right\}} $ and, fixed $\beta>0$, apply \eqref{weak_sol} with $w=v v_T^{2\beta} \in \HuzL$. Then we get
\[
\begin{aligned}
\|vv_T^\beta\|^2&=\int_{\Cil}\left(|D(vv_T^\beta)|^2+(vv_T^\beta)^2\right)\,dxdx_{N+1}\\
&\leq c_\beta\int_\Omega \left(\omega v^2v_T^{2\beta}+\lambda \langle G,v^2\rangle v^2v_T^{2\beta}+f(x,v)vv_T^{2\beta} \right)dx.
\end{aligned}
\]
By {\bf(G)} and {\bf(Hi)} we can easily recover estimate (3.3) of \cite{Zelati_Nolasco}, obtaining
\[
\int_{\Cil}\left(|D(v_+^{\beta+1})|^2+(v_+^{\beta+1})^2\right)\,dxdx_{N+1}\leq c_\beta \int_\Omega\left( c v_+^{2\beta+2}dx+gv_+^{2\beta+2}\right)dx
\]
for some $g\in L^N(\Omega)$. Hence, proceed as in \cite{Zelati_Nolasco} to obtain the claim.
\end{proof}

Now, in order to state the regularity results, let us consider the problem
\begin{align}\label{pg}
\left\{
\begin{array}{lr}
\sqrt{- \Delta + m^2} u=g(x) &\mbox{ in }\Om. \\
u=0 &\mbox{ on } \partial \Omega,
\end{array}
\right.
\end{align}
and its related extended one
\begin{equation}\label{pg1}
\begin{cases}
- \Delta v+ m^2 v =0  & \mbox{ in }\Cil,\\
-\frac{\partial v}{\partial x_{N+1}} =  g(x) &\mbox{ on } \Om \times \{0\}\\
v = 0 &\mbox{ on } \partial_L \Cil := \partial \Om \times [0,\infty). 
\end{cases}
\end{equation}

From now on, if $v$ solves \eqref{pg1}, the associated solution of \eqref{pg} will be denoted by $u={\rm tr}\,u$, meaning that $u(\cdot)=v(\cdot,0)$. Adapting the proof of  \cite[Proposition 3.1]{Cabre_Tan}, and representing the space of all traces on $\Omega\times \{0 \}$ of functions in $\HuzL$ by the symbol $ {\mathcal V}_0$, we immediately have the following proposition:
\begin{prop}\label{stimaLieb}
Let $\alpha\in (0, 1)$, $\Omega$ be a $C^{2,\alpha}$ bounded domain of $\R^N$, $v\in \HuzL$ be the weak solution of \eqref{pg1}, $u={\rm tr}\,v$ be the weak solution of \eqref{pg} and $g\in {\mathcal V}_0^\ast\cap L^p(\Omega)$ for some $p\in(1,\infty)$. Then $v\in W^{2,p}(\Omega\times (0,R))$ for all $R>0$. If $g\in C^{\alpha}(\overline{\Omega}\times \R)$ and $g|_{\partial \Omega}\equiv 0$, then $v\in C^{1,\alpha}(\overline{\mathcal{C}})$, $u\in C^{1,\alpha}(\overline{\Omega})$.
\end{prop}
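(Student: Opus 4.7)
The proposition extends Cabr\'e--Tan's Proposition 3.1, which treats the massless case $m=0$, to the setting where the zero-order term $m^2 v$ appears in the interior equation. My plan is to view $m^2 v$ as an interior source and bootstrap. Rewriting the first line of \eqref{pg1} as $-\Delta v = -m^2 v$ in $\Cil$, I treat the right-hand side as an $L^q$ perturbation at each stage. Starting from $v\in\HuzL$, the Sobolev embedding in $\Omega\times(0,R)$ (dimension $N+1$) gives $v\in L^q$ for $q$ up to $2(N+1)/(N-1)$, hence $-m^2 v\in L^q$ locally; together with the Neumann datum $g\in L^p(\Omega)$, the Cabr\'e--Tan result yields $v\in W^{2,\min(p,q)}(\Omega\times(0,R))$. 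A further Sobolev embedding raises the integrability of $v$ itself, and iterating finitely many times produces $v\in W^{2,p}(\Omega\times(0,R))$ for every prescribed $p\in(1,\infty)$ and every $R>0$.

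For the H\"older conclusion, once the iteration reaches some $p>N+1$, the Morrey embedding gives $v\in C^{1,\beta}$ on each bounded slab for some $\beta\in(0,1)$, and therefore $-m^2 v\in C^{0,\beta}$. Combined with $g\in C^{\alpha}(\overline{\Omega})$, Schauder estimates adapted to the mixed Dirichlet--Neumann problem on $\Cil$ then yield $v\in C^{1,\alpha}(\overline{\Cil})$, after which taking the trace on $\Omega\times\{0\}$ gives $u\in C^{1,\alpha}(\overline{\Omega})$.

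The main obstacle I expect is boundary regularity at the ``corner'' $\partial\Omega\times\{0\}$, where the homogeneous Dirichlet condition on $\partial_L\Cil$ meets the Neumann condition on $\Omega\times\{0\}$. The compatibility hypothesis $g|_{\partial\Omega}\equiv 0$ is exactly what makes the two boundary data consistent there, enabling $C^{1,\alpha}$ regularity up to this edge; the $C^{2,\alpha}$ smoothness of $\partial\Omega$ then permits a local flattening reducing the problem to standard Schauder theory for a mixed boundary value problem in a half-space. A secondary technical point is the unboundedness of $\Cil$, but since we only need estimates on each finite slab $\Omega\times(0,\xnp)=\Omega\times(0,R)$, the iteration produces uniform bounds for any fixed $R>0$, so the global claim reduces to a countable collection of local statements.
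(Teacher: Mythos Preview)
Your proposal is sound and aligns with the paper's approach: the paper itself provides no proof beyond the remark that the result follows ``immediately'' by adapting \cite[Proposition 3.1]{Cabre_Tan} to the case $m\neq 0$. Your bootstrap strategy---treating the zero-order term $m^2 v$ as an interior source and iterating the $W^{2,p}$ estimates together with Sobolev embeddings until the desired integrability is reached, then invoking Schauder theory---is exactly the natural way to carry out that adaptation, and your identification of the compatibility condition $g|_{\partial\Omega}\equiv 0$ as the ingredient needed for $C^{1,\alpha}$ regularity at the edge $\partial\Omega\times\{0\}$ is correct.
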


\section{Constant sign solutions}

Our main result is the following
\begin{theo}\label{theorem_2_Mountain_Pass_solutions}
Under hypotheses {\bf(H)} and {\bf(G)}, for any $\lambda>0$ problem \eqref{equiv} admits two non trivial bounded solutions, one strictly positive and one strictly negative in $\Omega$.
\end{theo}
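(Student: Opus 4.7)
The plan is to apply the Mountain Pass Theorem to two truncated functionals enforcing the sign of the solution, using the quasi-monotonicity \textbf{(Hiii)} and the maximizing trick of Mugnai--Papageorgiou to replace the Ambrosetti--Rabinowitz condition.

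\medskip

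\noindent\textit{Truncation and mountain-pass geometry.} Writing $v=v^+-v^-$ with $v^\pm\ge 0$, set
\begin{equation*}
J_+(v)=\frac{1}{2}\|v\|^2-\frac{\oo}{2}|v^+|_2^2-\frac{\la}{4}\int_{\Om}\conv{G}{(v^+)^2}(v^+)^2\,dx-\int_{\Om}F(x,v^+)\,dx.
\end{equation*}
A direct calculation yields $J_+'(v)(-v^-)=\|v^-\|^2$, so every critical point of $J_+$ is non-negative and, on $\{v\ge 0\}$, coincides with a critical point of the original functional $J$. Combining \eqref{Hi_&_Hiv_joined}, \eqref{tracem}, \eqref{maggconv}, and the inequality $\oo+\theta_\infty<m$ from \textbf{(Hiv)} gives $J_+(v)\ge\alpha>0$ on some sphere $\|v\|=\rho$, while \textbf{(Hii)} drives $J_+(t\vp_0)\to-\infty$ as $t\to+\infty$ along any non-negative $\vp_0\in\HuzL\setminus\{0\}$.

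\medskip

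\noindent\textit{Cerami condition (the main obstacle).} Let $(v_n)\subset\HuzL$ satisfy $J_+(v_n)\to c$ and $(1+\|v_n\|)\|J_+'(v_n)\|_{\ast}\to 0$. Testing $J_+'(v_n)$ with $-v_n^-$ yields $\|v_n^-\|\to 0$, and the identity
\begin{equation*}
2J_+(v_n)-J_+'(v_n)(v_n)=\frac{\la}{2}\int_\Om\conv{G}{(v_n^+)^2}(v_n^+)^2\,dx+\int_\Om\sigma(x,v_n^+)\,dx\to 2c
\end{equation*}
bounds both non-negative terms on the right. Assume, for contradiction, $\|v_n\|\to\infty$ and set $u_n=v_n/\|v_n\|$; up to a subsequence, $u_n\rightharpoonup u\ge 0$ in $\HuzL$ with strong convergence in $L^s(\Om)$ for $s<2N/(N-1)$. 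If $u\not\equiv 0$, the uniform form of \textbf{(Hii)} combined with \eqref{Hi_consequence} gives, for every $M>0$, a constant $K_M$ such that $F(x,s)\ge Ms^2-K_M$, whence
\begin{equation*}
\frac{1}{\|v_n\|^2}\int_\Om F(x,v_n^+)\,dx\ge M|u_n^+|_2^2-\frac{K_M|\Om|}{\|v_n\|^2}\to M|u^+|_2^2;
\end{equation*}
since $M$ is arbitrary, $J_+(v_n)/\|v_n\|^2\to-\infty$, contradicting $J_+(v_n)\to c$. Hence $u\equiv 0$. Now pick $t_n\in[0,1]$ maximizing $t\mapsto J_+(tv_n)$ on $[0,1]$. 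Since $u_n\to 0$ in every subcritical Lebesgue norm, \eqref{Hi_consequence} and \eqref{W_inequality} give $J_+(cu_n)\to c^2/2$ for each $c>0$, and the maximality of $t_n$ forces $J_+(t_nv_n)\to+\infty$. The case $t_n=1$ contradicts $J_+(v_n)\to c$; otherwise $t_n\in(0,1)$, the maximality condition yields $J_+'(t_nv_n)(t_nv_n)=0$, and hence
\begin{equation*}
J_+(t_nv_n)=\frac{\la}{4}\int_\Om\conv{G}{(t_nv_n^+)^2}(t_nv_n^+)^2\,dx+\frac{1}{2}\int_\Om\sigma(x,t_nv_n^+)\,dx.
\end{equation*}
Hypothesis \textbf{(Hiii)}, applied with $0\le t_nv_n^+\le v_n^+$, bounds the $\sigma$-integral by $\int_\Om\sigma(x,v_n^+)\,dx+\|\beta^*\|_{L^1(\Om)}$, while $t_n^4\le 1$ bounds the Hartree integral by its value at $v_n^+$; both have already been controlled, contradicting $J_+(t_nv_n)\to+\infty$. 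Therefore $(v_n)$ is bounded in $\HuzL$, and a standard argument using the compactness of the Hartree and non-linear terms (via \eqref{compact_inclusion} and \eqref{maggconv}) yields strong convergence of a subsequence.

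\medskip

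\noindent\textit{Existence and strict sign.} The Mountain Pass Theorem then produces a non-trivial critical point $v_+$ of $J_+$ with $v_+\ge 0$, whose trace $u_+$ solves \eqref{stationary}. Proposition~\ref{Regularity_of_weak_solutions} gives $v_+,u_+\in L^\infty$; a Young-type estimate based on \textbf{(G)} together with \textbf{(Hi)} then shows that the datum $\oo u_++\la\conv{G}{u_+^2}u_++f(\cdot,u_+)$ lies in $L^\infty(\Om)\cap C^\alpha(\overline{\Om})$ and vanishes on $\partial\Om$, so Proposition~\ref{stimaLieb} upgrades $v_+$ to $C^{1,\alpha}(\overline{\Cil})$. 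The strong maximum principle applied to $-\Delta v_++m^2v_+=0$ in $\Cil$ gives $v_+>0$ in $\Cil$; if $u_+(x_0)=0$ for some $x_0\in\Om$, Hopf's boundary point lemma at $(x_0,0)$ would force $-\partial v_+/\partial x_{N+1}(x_0,0)<0$, contradicting the boundary condition, which evaluates to $\oo u_+(x_0)+\la\conv{G}{u_+^2}(x_0)u_+(x_0)+f(x_0,0)=0$. Hence $u_+>0$ in $\Om$. The strictly negative solution is obtained by applying the same scheme to
\begin{equation*}
J_-(v)=\frac{1}{2}\|v\|^2-\frac{\oo}{2}|v^-|_2^2-\frac{\la}{4}\int_\Om\conv{G}{(v^-)^2}(v^-)^2\,dx-\int_\Om F(x,-v^-)\,dx,
\end{equation*}
whose critical points are non-positive, strictly negative by the same regularity and Hopf argument.
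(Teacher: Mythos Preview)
Your argument follows essentially the same route as the paper: truncate the functional, verify the mountain-pass geometry via \textbf{(Hiv)} and \eqref{tracem} from below and superlinearity from above, establish the Cerami condition through the Jeanjean--Mugnai--Papageorgiou maximizing-path trick (splitting into $y\not\equiv0$ and $y\equiv0$, bounding $\int\sigma$ and the Hartree term via the identity $2J_+-J_+'(\cdot)(\cdot)$), and conclude by a maximum/Hopf principle. The cosmetic differences --- truncating the $\omega$-term as well, driving $J_+(t\varphi_0)\to-\infty$ by \textbf{(Hii)} rather than by the quartic Hartree term, and replacing the Fatou argument in the $y\not\equiv0$ case by the lower bound $F(x,s)\ge Ms^2-K_M$ --- are all legitimate and do not change the substance.

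There is, however, one genuine gap in your strict-positivity step. To invoke the $C^{1,\alpha}$ conclusion of Proposition~\ref{stimaLieb} you assert that the boundary datum $g=\omega u_++\lambda\langle G,u_+^2\rangle u_++f(\cdot,u_+)$ lies in $C^\alpha(\overline\Omega)$. Under the stated hypotheses this is not justified: $f$ is merely Carath\'eodory, so $x\mapsto f(x,u_+(x))$ need not be continuous, and assumption \textbf{(G)} gives only an $L^r$ bound on the kernel, which does not force $x\mapsto\langle G,u_+^2\rangle(x)$ to be H\"older. What you \emph{can} establish (and what suffices) is $g\in L^\infty(\Omega)$: $u_+\in L^\infty$ by Proposition~\ref{Regularity_of_weak_solutions}, $|f(x,u_+)|\le \|a\|_\infty+c\|u_+\|_\infty^{r-1}$ by \textbf{(Hi)}, and $\|\langle G,u_+^2\rangle\|_\infty\le\|W\|_r\|u_+^2\|_{r'}$ by Young. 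Then the first part of Proposition~\ref{stimaLieb} yields $v_+\in W^{2,p}(\Omega\times(0,R))$ for every $p<\infty$, hence $v_+\in C^1$ up to $\Omega\times\{0\}$ for $p>N+1$, and your Hopf argument goes through. The paper sidesteps this bootstrap entirely by citing the strong maximum principle in \cite[Proposition~3.2]{mamu}.
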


The proof of Theorem \ref{theorem_2_Mountain_Pass_solutions} is based on an application of the Mountain Pass Theorem to functionals $J_+(v)$, $J_-(v)$, defined in $\HuzL$ as follows:
% \begin{equation}\label{funct_pm}
% J_{\pm}(v) = \frac{1}{2} \int_{\Rnp} \left[ |Dv|^2 + m^2  v^2  \right] - \int_{\R_N} \left[ \frac{\oo}{2} v^2 + \frac{\la}{4} \left(W * v^2\right)v^2  
% + F_{\pm}(x,v) \right],
% \end{equation}
% \begin{equation}\label{funct_pm_W_Out}
% J_{\pm}(v) = \frac{1}{2} \int_{\Rnp} \left[ |Dv|^2 + m^2  v^2  \right] - \int_{\R_N} \left[ \frac{\oo}{2} v^2 + \frac{\la}{4} \left(W * v^2\right){v^\pm}^2  
% + F_{\pm}(x,v) \right],
% \end{equation}
% \begin{equation}\label{funct_pm_W_Ins}
% J_{\pm}(v) = \frac{1}{2} \int_{\Rnp} \left[ |Dv|^2 + m^2  v^2  \right] - \int_{\R_N} \left[ \frac{\oo}{2} v^2 + \frac{\la}{4} \left(W * {v^\pm}^2\right)v^2  
% + F_{\pm}(x,v) \right],
% \end{equation}
\begin{equation}\label{funct_pm_W}
\begin{aligned}
J_{\pm}(v) &= \frac{1}{2} \int_{\Cil} \left[ |Dv|^2 + m^2  v^2  \right]dxdx_{N+1}\\
&- \int_{\Omega} \left[ \frac{\oo}{2} v^2 + \frac{\la}{4} \conv{G}{{v^\pm}^2}{v^\pm}^2  
+ F_{\pm}(x,v) \right]dx.
\end{aligned}
\end{equation}
Here $\ds F_{\pm}(x,v)= F(x,\pm v^{\pm})$, where $v^+=\max\{v,0\}$ and $v^-=\max\{-v,0\}$ denote the positive and the negative part of $v$, respectively.

However, though verifying the geometrical assumptions of the mountain pass is not very hard, thanks to some inequalities established above, the verification of the compactness condition is the hardest part. Moreover, since our assumptions are very general and do not imply a growth of order $q>2$ at infinity, the usual Palais--Smale condition has to be replaced by the generally weaker Cerami condition:
\begin{defi}
Let $X$ be a Banach space with topological dual $X^\ast$. A $C^1$ functional $J:X\to \R$ is said to satisfy the Cerami condition - (C) for short - if every sequence $(u_n)_n\subset X$ such that $(J(u_n))_n$ is bounded and $(1+\|u_n\|)J'(u_n)\to 0$ in $X^\ast$ as $n\to \infty$, has a convergent subsequence.
\end{defi}
As shown by Bartolo-Benci-Fortunato \cite{bbf}, such a condition can successfully replace the Palais--Smale condition in proving a Deformation Theorem, and consequently a minimax theory for critical values. In particular, the classical Mountain Pass Theorem holds under this compactness condition and we will apply such a version (see \cite[Corollary 5.2.7]{gp}).

Now, we will check the Mountain Pass hypothesis for $J_{+}$, but analogous results hold true with minor changes for $J_-$ (and $J$). In this way we will exhibit the existence of a positive and of a negative solution.

First, we show that $J_+$ has a strict local minimum at the origin: take $v\in\HuzL$, then, by \eqref{maggconv} and \eqref{Hi_&_Hiv_joined} we find
\begin{equation}\label{strict_pos_1}
\begin{aligned}
J_+(v) &\geq \frac{1}{2} \np{2}{2}{Dv} + \frac{m^2}{2}  \np{2}{2}{v}  - \frac{\oo}{2}  |v|_2^2 - \lambda C_G \np{}{4}{v} \\
&
- \int_\Omega \frac{\theta(x) + \e}{2}(v^+)^2dx - C_{\e} |v^+|_r^r\\
&\geq \frac{1}{2}  \left[  \np{2}{2}{Dv} + m^2  \np{2}{2}{v} - (\oo + \theta_{\infty} + \e) |v|^2_2 \right]  - \lambda C_G\np{}{4}{v} - \tilde{C_\ve}\np{}{r}{v},
\end{aligned}
\end{equation}
for some $\tilde{C_\ve}>0$.

If $\omega+ \theta_{\infty}+\e\leq0$, the claim follows immediately, since $r>2$. If $\omega+ \theta_{\infty}+\e>0$, by {\bf Hiv)} we can suppose that $\theta_\infty+\e<m-\omega$, and using \eqref{tracem}, we find a positive constant $\tilde{c}$ such that
\begin{equation}\label{sotto52}
\begin{aligned}
J_+(v)&\geq \frac{1}{2}\left(1-\frac{\omega+\theta_\infty+\e}{m}\right)\|Du\|_2^2+\frac{1}{2} \Big(m^2-m(\omega+\theta_\infty+\e)\Big)\|v\|_2^2\\
&- C_G\np{}{4}{v} - C_\ve\np{}{r}{v}\\
&\geq \tilde{c} \np{}{2}{v}- C_G\np{}{4}{v} - C_\ve\np{}{r}{v}.
\end{aligned}
\end{equation}
Thus,  0 is a strict local minimum for $J_+$, and there exists $\rho>0$ such that
\begin{equation}\label{eta_+}
0=J_+(0) < \inf\{J_+(u)\,:\, \nn{}{}{u}= \rho \}:= \eta_+.
\end{equation}

Next, by \eqref{Hi_&_Hiv_joined}, we have that for every $v\in \HuzL$
% \begin{multline}\label{funct_norm_+}
% J_+(v) =\frac{1}{2}  \np{2}{2}{Dv} + \frac{m^2}{2}  \np{2}{2}{v}
%  - \frac{\oo}{2}  |v|_2^2 - \frac{\la}{4} \int_{\R_N} \left(W * v^2\right)v^2  - \int_{\R_N}  F(x,v^+) \\
%  \leq \max{ \{1,m^2 \} }\np{}{2}{v} - \frac{\oo}{2}  |v|_2^2 - \frac{\la}{4} \int_{\R_N} \left(W * v^2\right)v^2 + \frac{\theta_\infty+1}{2} |v^+|^2_2 + C |v^+|_r^r.
% \end{multline}

% \begin{multline}\label{funct_norm_+_W_Ins}
% J_+(v) =\frac{1}{2}  \np{2}{2}{Dv} + \frac{m^2}{2}  \np{2}{2}{v}
%  - \frac{\oo}{2}  |v|_2^2 - \frac{\la}{4} \int_{\R_N} \left(W * {v^+}^2\right)v^2  - \int_{\R_N}  F(x,v^+) \\
%  \leq \max{ \{1,m^2 \} }\np{}{2}{v} - \frac{\oo}{2}  |v|_2^2 - \frac{\la}{4} \int_{\R_N} \left(W * {v^+}^2\right)v^2 + \frac{\theta_\infty+1}{2} |v^+|^2_2 + C |v^+|_r^r.
% \end{multline}

% \begin{multline}\label{funct_norm_+_W_Out}
% J_+(v) =\frac{1}{2}  \np{2}{2}{Dv} + \frac{m^2}{2}  \np{2}{2}{v}
%  - \frac{\oo}{2}  |v|_2^2 - \frac{\la}{4} \int_{\R_N} \left(W * v^2\right){v^+}^2  - \int_{\R_N}  F(x,v^+) \\
%  \leq \max{ \{1,m^2 \} }\np{}{2}{v} - \frac{\oo}{2}  |v|_2^2 - \frac{\la}{4} \int_{\R_N} \left(W * v^2\right){v^+}^2 + \frac{\theta_\infty+1}{2} |v^+|^2_2 + C |v^+|_r^r.
% \end{multline}
\begin{equation}\label{funct_norm_+_W}
\begin{aligned}
J_+(v) 
& \leq \frac{\max{ \{1,m^2 \} }}{2}\np{}{2}{v} - \frac{\oo}{2}  |v|_2^2 - \frac{\la}{4} \int_{\Omega} \conv{G}{(v^+)^2}{v^+}^2dx\\
& + \frac{\theta_\infty+\ve}{2}|v|_2^2+C_\ve|v|_r^r 
%\frac{\theta_\infty+1}{2} |v^+|^2_2 + C |v^+|_r^r.
\end{aligned}
\end{equation}
Thus, if $t>0$, and we choose a nonnegative $v \in \HuzL$, 
% MAURO: Quale ipotesi minima per garantire che esista una funzione v tale che $\int_{\Omega} \conv{G}{{v^+}^2}} {v^+}^2 > 0$?
there exist positive constants $c_1$, $c_2$, $c_3$, $c_4$, $c_5$ such that
\begin{equation}\label{funct_norm_+_decreasing}
J_+(tv) 
\leq c_1 t^2 - c_2 t^2 - c_3 t^4 + c_4 t^2 + c_5 t^r \xrightarrow[t \rightarrow \infty]{}  - \infty,
\end{equation}
since $r<4$ for all $N\geq2$.

In this way we have proved the validity of the geometric conditions of the Mountain Pass Theorem. Next, we proceed by showing the compactness hypothesis in the form of the Cerami condition.
%%%%%%%%%%%%%%%%%%%%%%%%%%%%%%%%%%%%%%%%%%%%%%%%%%%%%%%%%%%%%%%%%%%%%%%%%%%%%%%%%%%%%%%%%%%%%%%%%%%%%%%%%%%%%%%

\subsection{Verification of (C)}
%%%%%%%%%%%%%%%%%%%%%%%%%%%%%%%%%%%%%%%%%%%%%%%%%%%%%%%%%%%%%%%%%%%%%%%%%%%%%%%%%%%%%%%%%%%%%%%%%%%%%%%%%%%%%%%
Let $(u_n)_n$ be a Cerami sequence in $\HuzL$, i.e. a sequence such that
\begin{eqnarray}\label{Cerami_Condition}
\left\{
\begin{array}{l}
|J_+(u_n)|\leq M \; \forall \,n \in \N \mbox{ and}\\
\left( 1 + \np{}{}{u_n} \right) J_+'(u_n) \xrightarrow[n \rightarrow \infty]{} 0 \mbox{ in } \left[\HuzL\right]^*
\end{array}
\right.
\end{eqnarray}
for some $M>0$. Now, we prove that $(u_n)_n$ admits a converging subsequence.

\begin{lem}
The sequence $(u_n)_n$ is bounded.
\end{lem}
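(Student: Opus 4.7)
The argument is by contradiction: assume $\|u_n\|\to\infty$ and work separately on $u_n^+$ and $u_n^-$. The negative part will be eliminated using the linear coercivity coming from $\oo<m$; for the positive part the strategy, borrowed from \cite{Mugnai_Papageorgiou}, is to combine the Cerami identity $2J_+(u_n)-J_+'(u_n)u_n=O(1)$ with the quasi-monotonicity of $\sigma$ in Hiii), while handling the $4$-homogeneous Hartree contribution separately.

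\textbf{Step 1 (elimination of $u_n^-$).} Test the derivative with $w=-u_n^-\in\HuzL$. Since $u_n^+u_n^-=0$ a.e., both the Hartree and the nonlinear term drop (in the latter, $f(x,u_n^+)=0$ where $u_n<0$), leaving
\[
\|u_n^-\|^2-\oo|u_n^-|_2^2 = o(1).
\]
By the trace inequality \eqref{tracem}, $|u_n^-|_2^2\leq \|u_n^-\|^2/m$, and Hiv) forces $\oo<m$; hence $\|u_n^-\|\to 0$.

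\textbf{Step 2 (Hartree and $\sigma$ bounds).} A direct computation gives
\[
2J_+(u_n)-J_+'(u_n)u_n \;=\; \frac{\la}{2}\int_\Omega \conv{G}{(u_n^+)^2}(u_n^+)^2\,dx + \int_\Omega \sigma(x,u_n^+)\,dx.
\]
Cerami yields $|2J_+(u_n)-J_+'(u_n)u_n|\leq 2M+o(1)$. Taking $s=0$ in Hiii) gives $\sigma(x,t)\geq -\beta^*(x)$ for all $t\geq 0$, so $\int\sigma(x,u_n^+)\geq -|\beta^*|_1$; since $\la>0$ and the Hartree integral is nonnegative, both summands above are separately $O(1)$.

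\textbf{Step 3 (contradiction for $u_n^+$).} Suppose $\|u_n^+\|\to\infty$ and set $v_n:=u_n^+/\|u_n^+\|$, so up to a subsequence $v_n\rightharpoonup v\geq 0$ in $\HuzL$ and $v_n\to v$ in $L^q(\Omega)$ for every $q\in[1,2N/(N-1))$. If $v\not\equiv 0$, then on $\{v>0\}$ one has $u_n^+(x)\to\infty$, so by Hii) and Fatou $\int F(x,u_n^+)/\|u_n^+\|^2\,dx\to+\infty$. Dividing $J_+(u_n)\leq M$ by $\|u_n^+\|^2$ and using Step 1 (so $\|u_n\|^2/\|u_n^+\|^2\to 1$), the trace inequality and Step 2 (to control the Hartree piece), we get $J_+(u_n)/\|u_n^+\|^2\to -\infty$, contradicting $J_+(u_n)\geq -M$. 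If instead $v\equiv 0$, choose $t_n\in[0,1]$ with $J_+(t_nu_n^+)=\max_{t\in[0,1]}J_+(tu_n^+)$. Estimates \eqref{Hi_&_Hiv_joined} and \eqref{maggconv}, together with the compact embedding, imply $J_+(Rv_n)\to R^2/2$ for every fixed $R>0$, so $J_+(t_nu_n^+)\geq J_+(Rv_n)\to R^2/2$ for arbitrary $R$, i.e.\ $J_+(t_nu_n^+)\to\infty$. Since $J_+(0)=0$ and $J_+(u_n^+)=J_+(u_n)+o(1)=O(1)$ (using Step 1), $t_n\in(0,1)$ for $n$ large, so $J_+'(t_nu_n^+)(t_nu_n^+)=0$. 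Applying the identity of Step 2 at $t_nu_n^+$ gives
\[
2J_+(t_nu_n^+) \;=\; \frac{\la t_n^4}{2}\int_\Omega \conv{G}{(u_n^+)^2}(u_n^+)^2\,dx + \int_\Omega \sigma(x,t_nu_n^+)\,dx,
\]
and Hiii) with $s=t_nu_n^+\leq u_n^+=t$ yields $\int\sigma(x,t_nu_n^+)\leq\int\sigma(x,u_n^+)+|\beta^*|_1$; since $t_n^4\leq 1$, Step 2 makes both terms $O(1)$, contradicting $J_+(t_nu_n^+)\to\infty$.

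\textbf{Main obstacle.} The hardest point is the case $v\equiv 0$ in Step 3: lacking A--R, one cannot dominate $F$ by $f\cdot$ directly, and the truncation-and-maximization device along the ray through $u_n^+$, combined with the quasi-monotonicity of $\sigma$, is essential. The Hartree term is a genuine new difficulty relative to \cite{Mugnai_Papageorgiou}, since it is $4$-homogeneous and could a priori overwhelm the right-hand side at $t_nu_n^+$; the bound on $\Phi(u_n^+)$ extracted in Step 2 is precisely what is needed to neutralize it, thanks to the inequality $t_n^4\leq 1$.
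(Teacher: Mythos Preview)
Your proof is correct and follows essentially the same route as the paper: test $J_+'(u_n)$ against $u_n^-$ to control the negative part, combine $J_+(u_n)$ with $J_+'(u_n)u_n$ to bound both the Hartree term and $\int_\Omega\sigma(x,u_n^+)$, then argue by contradiction on $y_n=u_n^+/\|u_n^+\|$, using Fatou and {\bf Hii)} when $y\not\equiv0$ and the ray-maximization trick together with {\bf Hiii)} and the Hartree bound when $y\equiv0$. The only cosmetic differences are that you package the key identity as $2J_+(u_n)-J_+'(u_n)u_n$ (and reuse it at $t_nu_n^+$), and that you record $\|u_n^-\|\to0$ rather than merely boundedness; both are equivalent to what the paper does.
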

\begin{proof}
From \eqref{Cerami_Condition} we have
\begin{equation}\label{Cerami_2}
|J_+'(u_n)h| \leq \frac{\e_n \np{}{}{h}}{1+\np{}{}{u_n}}\; \mbox{ for all }  h \in \HuzL,
\end{equation}
where $\e_n\to 0$ as $n\to \infty$. Writing \eqref{Cerami_2} explicitly, using the analogue of \eqref{funct_derivative} for $J_+$, we find 
% \begin{multline}\label{eq_h}
% \left| \dual{2,N+1}{Du_n }{Dh_n} + m^2 \dual{2,N+1}{u_n}{h_n} - \oo \dual{2,N}{u_n}{h_n} \right.\\
% \left. - \la \int_{\R^N}(W*u_n^2)u_n h_n - \int_{\R^N} f_+(x,u_n)h_n \right| 
% \leq \frac{\e_n\np{}{}{h_n}}{1+\np{}{}{u_n}}
% \end{multline}
% \begin{multline}\label{eq_h_W_Ins/Out}
% \left| \dual{2,N+1}{Du_n }{Dh_n} + m^2 \dual{2,N+1}{u_n}{h_n} - \oo \dual{2,N}{u_n}{h_n} \right.\\
% \left. - \frac{\la}{2} \int_{\R^N}(W*{u_n^+}^2)u_n h_n - \frac{\la}{2} \int_{\R^N}(W*u_n^2)u_n^+ h_n - \int_{\R^N} f_+(x,u_n)h_n \right| 
% \leq \frac{\e_n\np{}{}{h_n}}{1+\np{}{}{u_n}}
% \end{multline}
\begin{equation}\label{eq_h_W}
\begin{aligned}
\left|\int_\Cil [Du_n \cdot Dh \right. &+ m^2 u_nh]dxdx_{N+1} - \oo \int_\Omega u_nh\,dx\\
&\left. - \la \int_{\Omega}\conv{G}{(u_n^+)^2} u_n^+ h \,dx- \int_{\Omega} f_+(x,u_n)h\,dx \right| 
\leq \frac{\e_n\np{}{}{h}}{1+\np{}{}{u_n}} 
\end{aligned}
\end{equation}
Now, in \eqref{eq_h_W} we choose alternatively $h= u_n^-$  and $h= u_n^+$, so that  we find, respectively,
\begin{equation}\label{C_with_un-}
\left| \np{2}{2}{Du_n^{-}} + m^2 \np{2}{2}{u_n^{-}} -\oo \nt{2}{2}{u_n^{-}}  \right| 
\leq \frac{\e_n\np{}{}{u_n^{-}}}{1+\np{}{}{u_n}},
\end{equation}
and
\begin{equation}
\begin{aligned}\label{C_with_un+} 
\bigg| \np{2}{2}{Du_n^{+}} \bigg. &+ m^2 \np{2}{2}{u_n^+} -\oo \nt{2}{2}{u_n^+}\\
&\left.- \la \int_{\Omega}\conv{G}{(u_n^+)^2}(u_n^{+})^2dx  -\int_{\Omega} f(x,u_n^{+})u_n^{+} dx\right|
\leq\frac{\e_n\np{}{}{u_n^{+}}}{1+\np{}{}{u_n}}.
\end{aligned}
\end{equation}
By using \eqref{tracem}, from \eqref{C_with_un-} we immediately see that $\ds (u_n^{-})_n$ is bounded in $\HuzL$.

Now, we rewrite $J_+(u_n)$ as sum of two components, acting on $u_n^+$ and $u_n^-$ separately:
\begin{equation}\label{C_bound_1_W}
\begin{aligned}
J_+(u_n) &= 
\frac{1}{2}\left[ \np{2}{2}{Du_n^{-}} + m^2 \np{2}{2}{u_n^{-}} -\oo \nt{2}{2}{u_n^{-}}\right]   \\
&+\frac{1}{2}\left[ \np{2}{2}{Du_n^{+}} + m^2 \np{2}{2}{u_n^{+}} -\oo \nt{2}{2}{u_n^{+}}\right]\\
& - \frac{\la}{4} \int_{\Omega}\conv{G}{(u_n^+)^2}(u_n^{+})^2dx  -\int_{\Omega} F(x,u_n^{+})\,dx.
\end{aligned}
\end{equation}
By \eqref{C_bound_1_W} we can write
\begin{equation}\label{W_term_limited_1_W}
\begin{aligned}
J_+(u_n)& = \frac{1}{2} J_+'(u_n)(u_n^{-}) +\frac{1}{2} J_+'(u_n)(u_n^{+}) + \frac{\la}{4} \int_{\Omega}\conv{G}{(u_n^+)^2}{(u_n^+)^2}dx \\
&+ \frac{1}{2} \int_{\Omega} f(x,u_n^{+})u_n^{+} dx- \int_{\Omega} F(x,u_n^{+})dx \\
&= \frac{1}{2} J_+'(u_n)(u_n^{-}) + \frac{1}{2} J_+'(u_n)(u_n^{+}) + \frac{\la}{4} \int_{\Omega}\conv{G}{(u_n^+)^2}{(u_n^+)^2} dx\\
&+ \frac{1}{2} \int_{\Omega} \si(x,u_n^{+})\,dx.
\end{aligned}
\end{equation}
The first two terms of the last side are bounded by $\e_n$, see (\ref{Cerami_2}). Then,
\[
J_+(u_n) \geq 
-\e_n + \frac{\la}{4} \int_{\Omega}\conv{G}{(u_n^+)^2}{(u_n^+)^2}dx + \frac{1}{2} \int_{\Omega} \si(x,u_n^{+})\,dx.
\]
In addition, the last term is limited from below, since condition {\bf Hiii)} implies that
\[
0 = \si(x,0) \leq \si(x,t) + \beta^*(x) \; \forall \,t \geq 0.
\]
As a consequence, from (\ref{W_term_limited_1_W}) we get
\[
J_+(u_n) \geq 
-\e_n + \frac{\la}{4} \int_{\Omega}\conv{G}{(u_n^+)^2}{(u_n^+)^2}dx - \frac{1}{2} |\beta^*|_1
\]
Finally, from the bound on $J_+(u_n)$, see (\ref{Cerami_Condition}, and the non negativity of $G$, we get that there exists $M\geq 0$ such that
\begin{equation}\label{W_term_limited_5_W}
 \int_{\Omega}\conv{G}{(u_n^+)^2}(u_n^+)^2dx \in [0,M] \; \; \forall \,n \in\N.
\end{equation}

%On the other hand, $(u_n)_n$ being bounded, by using \eqref{maggconv} we can also suppose that
%\begin{equation}\label{W_term_limited_5_W2}
% \int_{\Omega}\conv{G}{(u_n^-)^2}(u_n^-)^2dx \in [0,M] \; \; \forall \,n \in\N.
%\end{equation}

%MAURO
% \fbox{
% \parbox{1\textwidth}{
% RMK It turns out that also $\ds \int_{\Omega}(W*u_n^2)u_n^2 \in [0,M], \; \; \forall n \in\N$,\\ as emerges, for example, considering the functional $J_{F+}$ where only the $F$ component is truncated.
% }
% }
%Now, from (\ref{C_with_un-}) and (\ref{W_term_limited_5}) we obtain the boundedness of $u_n^{-}$:
% \begin{equation}\label{un-_bounded}
% \np{}{}{u_n^{-}} \leq M_1, \; \; \forall n \in\N
% \end{equation}
% given that the first three terms in (\ref{C_with_un-}) constitute an equivalent norm in $\HuzL$. We shall indicate this equivalent norm by means of the symbol $\trin{}{}{u_n}$.

Let us now remark that, by \eqref{tracem},
\[
\np{2}{2}{D\cdot} + m^2 \np{2}{2}{\cdot} -\oo \nt{2}{2}{\cdot}
\]
defines a quantity which is equivalent to $\|\cdot\|^2$ in $\HuzL$, which we shall denote by $\trin{}{2}{\cdot}$, from now on. Using this notation, starting from \eqref{C_with_un+}, by using \eqref{W_term_limited_5_W}% and \eqref{W_term_limited_5_W2},
, we get the existence of $M_2\geq0$ such that
\begin{equation}\label{for_contradiction_1}
 \left| \trin{}{2}{u_n^{+}} - \int_{\Omega} f(x,u_n^{+})u_n^{+} \right| \leq M_2 \mbox{ for all } n \in\N.
\end{equation}

Moreover, from the bound on $J_{+}(u_n)$ given by \eqref{Cerami_Condition}, from the bound on the Green--potential term $G$ in \eqref{W_term_limited_5_W} and from the bound on $u_n^{-}$, starting from \eqref{C_bound_1_W}, we also get
\begin{equation}\label{for_contradiction_2}
 \left| \frac{1}{2}\trin{}{2}{u_n^{+}} - \int_{\Omega} F(x,u_n^{+}) \right| \leq M_3 \mbox{ for all } n \in\N
\end{equation}
for some $M_3\geq0$. Combining both (\ref{for_contradiction_1}) and (\ref{for_contradiction_2}) we get
\begin{equation}\label{for_contradiction_3}
 \left| \int_{\Omega} \sigma(x,u_n^{+}) \right| \leq M_4 \mbox{ for all } n \in\N
\end{equation}
for some $M_4\geq0$.

\subsubsection*{Claim: $u_n^{+}$ is bounded}
We prove it by contradiction. Suppose that $(u_n^{+})_n$ is not bounded; then, we may assume that $\np{}{}{u_n^{+}} \xrightarrow[n]{} \infty$, and that
\[
y_n := \frac{u_n^{+}}{\np{}{}{u_n^{+}}} \underset{n}{\rightharpoonup} y \mbox{ in } \HuzL,
\]
and by \eqref{compact_inclusion}, we can also assume that 
\begin{eqnarray}\label{convergence_y_n}
\left\{
\begin{array}{l}
y_n  \xrightarrow[n]{} y \mbox{ in } L^q(\Omega) \mbox{ for every } q \in \left[1,\frac{2N}{N-1}\right)\\
y_n(x)  \xrightarrow[n]{} y(x)\geq 0 \mbox{ for a.e. $x$ in }\Omega.
\end{array}
\right.
\end{eqnarray}

We distinguish two cases, according to whether $y\not \equiv 0$ or $y\equiv 0$.
In the former case we consider the set $Z = \{ x\in\Omega \,:\, y(x)=0\}$, whose complementary set $Z^c$ has positive measure. It is clear that
\[
u_n^+ \xrightarrow[n]{} \infty \mbox{ a.e. in } Z^c.
\]
Therefore, by {\bf Hii)}, we get
\begin{equation}\label{}
\frac{F(x,u_n^+)}{\np{}{2}{u_n^+}} = \frac{F(x,u_n^+)}{|u_n^+|^2} \frac{|u_n^+|^2}{\np{}{2}{u_n^+}} \xrightarrow[n]{} \infty \mbox{ a.e. in } Z^c.
\end{equation}

Now, combining {\bf Hi)} and {\bf Hii)}, we see that there exists $g\in L^1(\Omega)$ such that $\dfrac{F(x,u_n^+)}{\np{}{2}{u_n^+}} \geq g(x)$ for a.e. $x\in\Omega$, and we can use Fatou's lemma to obtain
\begin{equation}\label{Fatou_1}
\begin{aligned}
\liminf\int_{\Omega} \frac{F(x,u_n^+)}{\np{}{2}{u_n^+}}  
& \geq \int_{Z^c}\liminf\frac{F(x,u_n^+)}{\np{}{2}{u_n^+}} +  \int_{Z}\liminf\frac{F(x,u_n^+)}{\np{}{2}{u_n^+}}\\ & \geq 
C + \int_{Z^c}\liminf\frac{F(x,u_n^+)}{\np{}{2}{u_n^+}} =\infty,
\end{aligned}
\end{equation}
where $C$ is a constant.

On the other hand, \eqref{for_contradiction_2} implies that
\[
\left|\ds 1 - \frac{F(x,u_n^+)}{\np{}{2}{u_n^+}}\right| \leq \frac{M_3}{\np{}{2}{u_n^+}},
\] 
in contradiction with \eqref{Fatou_1}. Hence, in this case the claim is proved.

We now turn to the second case, i.e. $y\equiv 0$ in $\HuzL$.
Let us set $\gamma_n(t): = J(t u_n^+ )$, for $t\in[0,1]$. The sequence $t_n= \underset{t\in[0,1]}{\operatorname{argmax}}\, \gamma_n (t) $ is well defined, since $\gamma_n \in C[0,1]$.

For every $k\in (0,\|u_n^+\|)$ we set $\tilde{t}_n=\frac{k}{\np{}{}{u_n^+}}$, so that $\gamma_n(\tilde{t}_n) = J(k y_n)$ and $\tilde{t}_n \in (0,1)$; thus
\begin{equation}\label{Bo_1}
\begin{aligned}
J(t_n u_n^+) \geq J(\tilde{t}_n u_n^+) &= \frac{1}{2} \trin{}{2}{\tilde{t}_n u_n^+} - \frac{\la}{4} \int_{\Omega}\conv{G}{(\tilde{t}_n u_n^+)^2}|\tilde{t}_n u_n^+|^2 
- \int_{\Omega} F(x,\tilde{t}_n u_n^+) \\
&= \frac{1}{2}k^2 - \frac{\la}{4} \int_{\Omega}\conv{G}{(\tilde{t}_n u_n^+)^2}|\tilde{t}_n u_n^+|^2 - \int_{\Omega} F(x,\tilde{t}_n u_n^+).
\end{aligned}
\end{equation}

From \eqref{convergence_y_n} and {\bf Hi)}, we see that
\[
\int_{\Omega} F(x, \tilde{t}_n u_n^+) \xrightarrow[n\to \infty]{} 0.
\] 
In addition, by the Lebesgue Theorem and \eqref{maggconv}, we have that 
\[
\int_{\Omega}\conv{G}{(\tilde{t}_n u_n^+)^2}|\tilde{t}_n u_n^+|^2  \xrightarrow[n\to \infty]{} 0.
\]
Hence, from (\ref{Bo_1}),
% \begin{multline}\label{Bo_2}
% J(t_n u_n^+) \geq J(\tilde{t}_n u_n^+) = \frac{1}{2}k^2 \underbrace{\trin{}{2}{y_n}}_{=1} - \frac{\la}{4} \underbrace{\int_{\Omega}(W*(\tilde{t}_n u_n^+)^2)|\tilde{t}_n u_n^+|^2 }_{\xrightarrow[n]{}0}
% - \underbrace{\int_{\Omega} F(x,\tilde{t}_n u_n^+)}_{\xrightarrow[n]{}0}
% \end{multline}
given $M >0$, there exists $N=N(M)$ such that
\begin{equation}\label{stimettine}
\begin{aligned}
& \left|\int_{\Omega} F(x, \tilde{t}_n u_n^+)\right|\leq \frac{M}{8},\\
& \lambda\int_{\Omega}\conv{G}{(\tilde{t}_n u_n^+)^2}|\tilde{t}_n u_n^+|^2\leq \frac{M}{2}
\end{aligned}
\end{equation}
for all $n\geq N$. Choosing $k=\sqrt{M}$, from \eqref{Bo_1} and \eqref{stimettine}, we finally get
\[
J(t_n u_n^+) \geq \frac{M}{2} \mbox{ for every } n > {N},
\]
that is:
\begin{equation}\label{Bo_4}
J(t_n u_n^+) \xrightarrow[n\to \infty]{}\infty.
\end{equation}
The limit in \eqref{Bo_4} implies that $t_n\neq 0$ for $n$ large enough. On the other hand, we also have $t_n\neq1$. Indeed, if $t_n=1$, we would have
\begin{equation}\label{Bo_5}
J(u_n^+) = \frac{1}{2} \trin{}{2}{ u_n^+} - \frac{\la}{4} \int_{\Omega}\conv{G}{(u_n^+)^2}|u_n^+|^2 
- \int_{\Omega} F(x, u_n^+),
\end{equation}
which is bounded, thanks to \eqref{W_term_limited_5_W} and \eqref{for_contradiction_2}. 
Eventually, we conclude that $t_n \in (0,1)$ for $n$ large enough; this implies that
\begin{equation}\label{Bo_6}
\begin{aligned}
0 &= t_n \left. \frac{d}{d t}  J(t u_n^+)\right|_{t=t_n} = t_n \dual{}{J'(t_n u_n^+)}{u_n^+} = \dual{}{J'(t_n u_n^+)}{t_n u_n^+}\\
&=\trin{}{2}{t_n u_n^+} - \la t_n^4\int_{\Omega}\conv{G}{(u_n^+)^2}|u_n^{+}|^2  -\int_{\Omega} f(x,t_n u_n^{+})t_n u_n^{+} \\
&= \trin{}{2}{t_n u_n^+} - \la t_n^4\int_{\Omega}\conv{G}{(u_n^+)^2}|u_n^{+}|^2  -2 \int_{\Omega} F(x,t_n u_n^+)  - \int_{\Omega} \sigma(x, t_n u_n^+)
\end{aligned}
\end{equation}
Using hypothesis {\bf(Hiii)}, from \eqref{Bo_6} and \eqref{for_contradiction_3}, we get the existence of a positive constant $M_5$ such that
\begin{equation}\label{Bo_7}
\begin{aligned}
\trin{}{2}{t_n u_n^+} &- \la t_n^4\int_{\Omega}\conv{G}{(u_n^+)^2}|u_n^{+}|^2  -2 \int_{\Omega} F(x,t_n u_n^+)   \\&= \int_{\Omega} \sigma(x, t_n u_n^+)
\leq \int_{\Omega} \sigma(x, u_n^+) + |\beta^*|_1 \leq M_5,
\end{aligned}
\end{equation}
for every $n$ large enough.

Finally, we show that \eqref{Bo_4} implies that the left-hand-side of \eqref{Bo_7} diverges, thus obtaining a contradiction. Indeed, 
\[
2 J(t_n u_n^+) = \trin{}{2}{t_n u_n^+} - \frac{\la}{2} t_n^4 \int_{\Omega}\conv{G}{(u_n^+)^2}| u_n^+|^2 
- 2\int_{\Omega} F(x,\tilde{t}_n u_n^+) \xrightarrow[n\to \infty]{} \infty;
\]
but, by \eqref{W_term_limited_5_W}, we obtain the announced contradiction.
 
As a consequence, $(u_n^+)_n$ is bounded. From \eqref{C_with_un-}, we see that the whole sequence $(u_n)_n$ is bounded in $\HuzL$, as claimed.
\end{proof}

\begin{lem}
$(u_n)_n$ converges strongly in $\HuzL$.
\end{lem}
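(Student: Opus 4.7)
The plan is to extract from the bounded Cerami sequence a weakly convergent subsequence and then upgrade weak to strong convergence in the usual way, by testing $J_+'(u_n)$ against $u_n-u$ and showing that all the subquadratic/nonlocal pieces pass to the limit by compactness, leaving a purely coercive quadratic form that forces $\|u_n-u\|\to 0$.

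First I would observe that, by the previous lemma, $(u_n)_n$ is bounded in $\HuzL$, so up to a subsequence we may assume $u_n \rightharpoonup u$ in $\HuzL$. By the compact embedding \eqref{compact_inclusion}, $u_n\to u$ strongly in $L^q(\Omega)$ for every $q\in\left[1,\frac{2N}{N-1}\right)$, and $u_n(x)\to u(x)$ for a.e.\ $x\in\Omega$; in particular the same holds for $u_n^\pm\to u^\pm$. From the Cerami condition we have $J_+'(u_n)(u_n-u)\to 0$, and written out this reads
\[
\begin{aligned}
\int_{\Cil}\!\bigl[Du_n\cdot D(u_n-u)+m^2 u_n(u_n-u)\bigr]dxdx_{N+1} &-\oo\int_\Omega u_n(u_n-u)\,dx \\
-\la\int_\Omega\conv{G}{(u_n^+)^2}u_n^+(u_n-u)\,dx
&-\int_\Omega f_+(x,u_n)(u_n-u)\,dx \longrightarrow 0.
\end{aligned}
\]

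Next, I would show that each of the three lower-order integrals on $\Omega$ tends to zero. For the $\omega$-term this is immediate since $u_n\to u$ in $L^2(\Omega)$. For the reaction term, hypothesis {\bf(Hi)} gives $|f_+(x,u_n)|\le a(x)+c|u_n|^{r-1}$ with $r\in\left(2,\frac{2N}{N-1}\right)$, so by H\"older and the strong $L^r(\Omega)$-convergence of $u_n$ to $u$ we get $\int_\Omega f_+(x,u_n)(u_n-u)\,dx\to 0$. For the nonlocal convolution term I would apply the estimate \eqref{maggconv} with $v=u_n^+$, $u=u_n^+$, $w=u_n-u$, together with boundedness of $(u_n)_n$ in $\HuzL$; more directly, since $2q<\frac{2N}{N-1}$, one can bound the integral by $|W|_r|u_n^+|_{2q}^2|u_n^+|_{2q}|u_n-u|_{2q}$ and use strong $L^{2q}$-convergence to conclude.

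Putting these together, the $\Cil$-integral $\int_{\Cil}[Du_n\cdot D(u_n-u)+m^2u_n(u_n-u)]dxdx_{N+1}$ tends to zero. By weak convergence $u_n\rightharpoonup u$ in $\HuzL$, the symmetric quantity with $u_n$ replaced by $u$ in the first factor also tends to zero; subtracting yields $\|u_n-u\|^2\to 0$, which is the desired strong convergence. The main obstacle I expect is the nonlocal Choquard-type term, and precisely here the integrability hypothesis {\bf(G)} and the previously established inequality \eqref{maggconv} are essential, because only thanks to $2q<\frac{2N}{N-1}$ does the compact trace embedding kick in; the other ingredients are the standard interplay between weak convergence in $\HuzL$ and compact convergence in subcritical $L^q(\Omega)$ spaces.
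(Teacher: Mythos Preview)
Your argument is correct and follows essentially the same route as the paper: extract a weakly convergent subsequence, test $J_+'(u_n)$ against $u_n-u$, kill the three boundary integrals by compact trace embeddings, and conclude strong convergence from the remaining quadratic form. One small caution: invoking \eqref{maggconv} alone for the nonlocal term only yields a bound by $C_G\|u_n^+\|^3\|u_n-u\|$, which does not tend to zero since $\|u_n-u\|$ is merely bounded; you need precisely the ``more direct'' estimate you state next, namely \eqref{W_inequality} together with $2q<\frac{2N}{N-1}$ and strong $L^{2q}(\Omega)$-convergence of the traces---this is exactly what the paper does.
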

\begin{proof}
First, being $(u_n)_n$ bounded in $\HuzL$, up to subsequences, we may assume that there exists $u\in \HuzL$ such that
\begin{eqnarray}\label{conver_from_bounded}
\left\{
\begin{array}{l}
u_n \underset{n}{\rightharpoonup} u \mbox{ in } \HuzL, \\
u_n \xrightarrow[n]{} u \mbox{ in } L^q(\Omega) \mbox{ for every } q \in \left[1,\frac{2N}{N-1}\right)\\
u_n \xrightarrow[n]{} u \mbox{ a.e. in }\Omega.
\end{array}
\right.
\end{eqnarray}
We claim that $(u_n)_n$ converges strongly to $u$ in $\HuzL$.
In order to prove this claim we will exploit \eqref{Cerami_Condition}, and in particular the fact that
\[
J_+'(u_n) \xrightarrow[n\to \infty]{} 0 \mbox{ in }\big(\HuzL\big)^*,
\]
which implies that 
\begin{equation}\label{Cerami_2_second}
J_+'(u_n)(u_n - u) \xrightarrow[n\to \infty]{} 0.
\end{equation}

But
\begin{align}
& J_+'(u_n)(u_n - u)= \trin{}{2}{u_n}-\int_\Cil Du_n\cdot Du\,dxdx_{N+1}\label{final_strategy_1}\\
&- \la \int_{\Omega}\conv{G}{(u_n^+)^2} {u_n^+} (u_n - u)dx -\int_{\Omega} f(x,u_n^+)(u_n - u)dx.\label{final_strategy_2}
\end{align}
Then, showing that \eqref{final_strategy_2} goes to 0 as $n\to \infty$, \eqref{Cerami_2_second} and \eqref{final_strategy_1} imply that $u_n\to u$ in the Hilbert space $\HuzL$.
First, the convergence
\[
\int_{\Omega}\conv{G}{(u_n^+)^2} {u_n^+} (u_n-u)dx \xrightarrow[n\to \infty]{}0 
\]
follows directly from \ref{W_inequality}. Then, from {\bf Hi)},
\begin{equation}\label{f_conv_2}
\left| \int_{\Omega} f(x,u_n^+)(u_n-u)dx \right| \leq \int_{\Omega} a(x)|u_n-u|dx + c\int_{\Omega} |u_n^+|^{r-1} |u_n-u|dx,
\end{equation}
and from \eqref{conver_from_bounded} we have that all integrals in \eqref{f_conv_2} go to 0 as $n\to \infty$.

We have thus proved that $J_+$ satisfies the Cerami condition.
\end{proof}

\begin{proof}[Proof of Theorem \ref{theorem_2_Mountain_Pass_solutions}]
We apply the Mountain Pass Theorem obtaining the existence of a critical point $u_0 \in \HuzL$ for $J_+$, with $u_0 \neq 0$,
$J_+(u_0)>0$ and
\begin{equation}\label{MP_J_p_W}
J_+'(u_0) = 0
\end{equation}
Applying (\ref{MP_J_p_W}) to $u_0^-$ we see that
\begin{equation}\label{null_neg_part}
J_+'(u_0)( u_0^- )= \np{}{2}{u_0^-} = 0
\end{equation}
so that $u_0 \geq 0$, $J'(u_0) = J_+'(u_0) = 0$ and, consequently, $u_0$ is a weak nonnegative and non trivial solution of problem (\ref{equiv}).
Furthermore, the maximum principle implies that $u_0>0$ in $\Omega$, see \cite[Proposition 3.2]{mamu}.

In the same way, using the functional $J_-$, it is possible to obtain a solution $v_0<0$ in $\Omega$.

Finally, by Proposition \ref{Regularity_of_weak_solutions},  we get the bound on the solutions.
\end{proof}

\noindent
\textbf{Ackowledgement.} D.M. is member of the {\em Gruppo Nazionale per l'Analisi Mate\-ma\-tica, la Probabilit\`a e le loro Applicazioni} (GNAMPA) of the {\em Istituto Nazionale di Alta Matematica} (INdAM) and his research is supported by the National Research Project {\sl Variational and perturbative aspects of nonlinear differential problems}.

\bibliographystyle{amsplain}

\begin{thebibliography}{99}

% \bibitem{18}
% {\sc P. Pucci and J. Serrin}, ``The Maximum Principle'', Progress in
% Nonlinear Differential Equations and their Applications 73,
% Birkh\"auser Verlag, Basel, 2007.

% \bibitem{rsw}
% {\sc P.H. Rabinowitz, J. Su and Z.Q. Wang}, {\it Multiple solutions
% of superlinear elliptic equations}, Rend. Lincei Mat. Appl. {\bf 18}
% (2007), 97--108.


\bibitem{bbf}
{\sc  Bartolo P., Benci V. and Fortunato D.}, {\it Abstract critical point theorems and applications to some nonlinear problems with “strong” resonance
at infinity}, Nonlinear Anal. {\bf7}, 981--1012 (1983).

\bibitem{bf}
{\sc Benci V. and Fortunato D}, {\it An eigenvalue problem for the Schr\"odinger--Maxwell equations},
Topolog. Meth. Nonlin. Analysis {\bf11}, 283--293 (1998).


\bibitem{Benci_Fortunato}
{\sc Benci V. and Fortunato D.}, {\it Spinning Q-Balls for the Klein-Gordon-Maxwell Equations}, 
Commun. Math. Phys. {\bf295}, 639--668 (2010).


\bibitem{Cabre_Tan}
{\sc Cabr{\'e} X. and Tan J.}, {\it Positive solutions of nonlinear problems involving the square root of the Laplacian}, 
Adv. Math. {\bf 224}, 2052--2093 (2010).

\bibitem{Caffarelli_Silvestre}
{\sc Caffarelli L. and Silvestre L.}, {\it An Extension Problem Related to the Fractional Laplacian}, 
Comm. Partial Differential Equations {\bf 32}, 1245-1260 (2007). 

\bibitem{Zelati_Nolasco}
{\sc Coti Zelati V. and Nolasco M.}, {\it Existence of ground states for nonlinear, pseudo-relativistic Schr\"odinger equations}, 
Rend. Lincei Mat. Appl. {\bf2}, 51--72 (2011).

\bibitem{tdnonex}
{\sc D'Aprile T and Mugnai D.}, {\it Non-Existence Results for the Coupled Klein-Gordon-Maxwell Equations}, 
Adv. Nonlinear Stud. {\bf4}, 307--322 (2004).

\bibitem{teadim}
{\sc D'Aprile T. and Mugnai D.}, {\it Solitary waves for nonlinear Klein-Gordon-Maxwell and Schr\"odinger--Maxwell equations}, Proc. Roy. Soc. Edinburgh Sect. A {\bf134}, 893--906 (2004). 

\bibitem{ES}
{\sc A. Elgart and B. Schlein}, {\it Mean field dynamics of boson stars}, Comm. Pure Appl.
Math. {\bf60}, 500–545 (2007).

\bibitem{Frohlich_Jonsson_Lenzmann}
{\sc Fr\"ohlich J., Jonsson B.L.G. and Lenzmann E.}, {\it Boson Stars as Solitary Waves}, 
Comm. Math. Phys. {\bf 274}, 1--30 (2007).

\bibitem{Frohlich_Lenzmann}
{\sc Fr\"ohlich J. and Lenzmann E.}, {\it Blow-Up for Nonlinear Wave Equations describing Boson Stars}, 
Comm. Pure Appl. Math.  {\bf 60}, 1691--1705 (2007).

\bibitem{gp}
{\sc Gasinski L. and Papageorgiou N.S.}, Nonlinear analysis. Ser. Math. Anal. Appl.  {\bf9}, Chapman \& Hall/CRC, Boca Raton, FL, 2006.


\bibitem{Gunther_Widman}
{\sc G{\"u}nther M. and Widman K.-O.}, {\it  The Green function for uniformly elliptic equations}, 
Manuscripta Math. {\bf 37}, 303--342 (1982).



\bibitem{Li_Wang_Zeng}
{\sc Li Y., Wang Z.-Q. and Zeng J.}, {\it Ground states of nonlinear Schr\"odinger equations with potentials},
Ann. Inst. H. Poincar\'e Anal. Non Lin\'eaire {\bf 23},  829--837 (2006).

\bibitem{Li_Zeng}
{\sc Li G. and Yang C.}, {\it The existence of a nontrivial solution to a nonlinear elliptic boundary
value problem of p-Laplacian type without the Ambrosetti-Rabinowitz condition}, 
Nonlinear Anal. {\bf 72}, 4602--4613, (2010).

\bibitem{Lieb_Thirring}
{\sc Lieb E.H. and Thirring W.E.}, {\it Gravitational collapse in quantum mechanics with relativistic kinetic energy}, Ann. Physics {\bf 155}, 494--512 (1984).

\bibitem{LY}
{\sc  E.H. Lieb and  H.-T. Yau}, {\it The Chandrasekhar theory of stellar collapse as the limit of
quantum mechanics}, Comm. Math. Phys. {\bf112}, 147–174 (1987).

\bibitem{Liu}
{\sc Liu S.}, {\it On superlinear problems without the Ambrosetti and Rabinowitz condition},
 Nonlinear Anal. {\bf 73}, 788--795, (2010).
 
\bibitem{Li_Wang}
{\sc Liu Z. and Wang Z.-Q.}, {\it  On the Ambrosetti-Rabinowitz superlinear condition},
 Adv. Nonlinear Stud. {\bf 4}, 561--572 (2004).

\bibitem{mamu}
{\sc Marinelli A. and Mugnai D.}, {\it The generalized logistic equation with indefinite weight driven by the square root of the Laplacian}, Nonlinearity {\bf27}, 2361--2376 (2014). 
 
 
\bibitem{Miyagaki_Souto}
{\sc  Miyagaki O. and Souto M.A.S.}, {\it Superlinear problems without Ambrosetti and Rabinowitz growth condition},
 J. Differential Equations {\bf 245}, 3628--3638, (2008).


\bibitem{KGBI}
{\sc Mugnai D.}, {\it Coupled Klein-Gordon and Born-Infeld type equations: looking for solitary waves}, Proc. R. Soc. Lond. Ser. A Math. Phys. Eng. Sci. {\bf 460}, 1519--1528 (2004).

\bibitem{Mugnai_Solitary}
{\sc Mugnai D.}, {\it Solitary waves in Abelian Gauge Theories with strongly nonlinear potentials}, 
Ann. Inst. H. Poincar\'e Anal. Non Lin\'eaire {\bf 27}, 1055--1071 (2010).

\bibitem{Mugnai_Pseudo}
{\sc Mugnai D.}, {\it The pseudorelativistic Hartree equation with a general nonlinearity: existence, non existence and variational identities}, 
Adv. Nonlinear Stud. {\bf13}, 799--823 (2013).

\bibitem{Mugnai_Poisson}
{\sc Mugnai D.}, {\it The Schr\"odinger--Poisson System with Positive Potential}, 
Comm. Partial Differential Equations {\bf 36}, 1099--1117 (2011).


\bibitem{Mugnai_Papageorgiou}
{\sc Mugnai D. and Papageorgiou N. S.}, {\it Wang's multiplicity result for superlinear (p,q)-equations without the Ambrosetti-Rabinowitz condition}, 
Trans. Amer. Math. Soc. {\bf366}, 4919--4937 (2014).

\bibitem{MR}
{\sc Mugnai D. and Rinaldi M.}, {\it Spinning Q-balls in Abelian Gauge Theories with positive
potentials: existence and non existence}, Calc. Var. Partial Differential Equations {\bf53}, 1--27 (2015).

\bibitem{Strauss_Compact}
{\sc Sickel W. and Skrzypczak L.}, {\it Radial subspaces of Besov and Lizorkin-Triebel spaces: 
extended Strauss lemma and compactness of embeddings}, 
J. Fourier Anal. Appl. {\bf 6}, 639--662 (2000).


\end{thebibliography}

\end{document}